\newcommand{\dd}{\mathrm{d}}
\newcommand{\E}{\mathbb{E}}
\newcommand{\R}{\mathbb{R}}
\newcommand{\Z}{\mathbb{Z}}
\newcommand{\e}{\varepsilon}
\newcommand{\p}[1]{\mathbb{P}\left( #1 \right)}
\newcommand{\scal}[2]{\left\langle #1, #2 \right\rangle}
\DeclareMathOperator{\sgn}{sgn}
\DeclareMathOperator{\vol}{vol}
\def\thm@space@setup{%
  \thm@preskip=12pt plus 0pt minus 0pt
  \thm@postskip=0pt plus 0pt minus 0pt
}
\xpatchcmd{\proof}{6\p@\@plus6\p@\relax}{\z@skip}{}{}
\newtheorem{theorem}{Theorem}
\newtheorem{lemma}[theorem]{Lemma}
\newtheorem{corollary}[theorem]{Corollary}
\theoremstyle{remark}
\newtheorem{remark}[theorem]{Remark}
\theoremstyle{definition}
\title{Convexity properties of sections of $1$-symmetric bodies and Rademacher sums}
\author{Joseph Kalarickal, David Rotunno, Salil Singh\footnote{Email: salils@andrew.cmu.edu}, Tomasz Tkocz\footnote{Research supported in part by the NSF grant DMS-2246484.}}
\date{\begin{normalsize}
\emph{Carnegie Mellon University, Pittsburgh, PA 15213, USA}
\end{normalsize}}
\begin{document}

\maketitle

\begin{abstract} 
We establish a monotonicity-type property of volume of central hyperplane sections of $1$-symmetric convex bodies, with applications to chessboard cutting. We parallel this for projections with a new convexity-type property for Rademacher sums.
\end{abstract}

\bigskip

\begin{footnotesize}
\noindent {\em 2020 Mathematics Subject Classification.} Primary 52A20; Secondary 60E15.

\noindent {\em Key words. hyperplane sections, 1-symmetric convex bodies, Rademacher sums, dual logarithmic Brunn-Minkowski inequality} 
\end{footnotesize}

\bigskip

\section{Introduction and results}

A line can intersect at most $2N-1$ squares of the standard $N \times N$ chessboard and this is achieved by a diagonal line pushed down a bit. It is only  recently that this fact has been generalised to higher dimensions and arbitrary convex bodies. Specifically, given a convex body $K$ in $\R^n$ and $N \geq 1$, consider the (open) cells of the lattice $\frac{1}{N}\Z^n$, that is the cubes $z + (0,\frac1N)^n$, $z \in \frac{1}{N}\Z^n$, and let $C_{K}(N)$ be the maximal number of cells contained in $K$ that a hyperplane in $\R^n$ can intersect. For the standard cube, $K = [0,1]^n$, we simply write $C_n(N) = C_{[0,1]^n}(N)$, so $C_2(N) = 2N-1$. B\'ar\'any and Frankl in \cite{FB1} showed that $C_3(N) \leq \frac{9}{4}N^2 + 2N+1$ for all $N \geq 1$ and $C_3(N) \geq \frac{9}{4}N^2 + N -5$ for all $N$ sufficiently large. In the companion work \cite{FB2}, they established the exact asymptotics of $C_{K}(N)$ as $N \to \infty$ for a fixed body $K$. Their main result is that
\[
C_{K}(N) = \beta_KN^{n-1}(1+o(1)), \qquad N \to \infty,
\]
with the constant $\beta_K$ of the leading term given by
\begin{equation}\label{eq:VK-def}
\beta_K = \max_{a \in \R^n\setminus \{0\}}\max_{t \in \R} \frac{\|a\|_1}{|a|}\vol_{n-1}(K \cap (ta + a^\perp)).
\end{equation}
Here and throughout, $|x|$ is the standard Euclidean norm, whereas $\|x\|_p$ is the $\ell_p$ norm of a vector $x$ in $\R^n$, so $|x| = \|x\|_2$. It is a consequence of the Brunn-Minkowski inequality that when $K$ is symmetric, say about the origin, then given an outer-normal vector $a$, the maximal volume section $\vol_{n-1}(K \cap (ta + a^\perp))$ is the central one at $t = 0$. Thus we define the $0$-homogeneous function,
\begin{equation}\label{eq:VKa-def}
V_K(a) = \frac{\|a\|_1}{|a|}\vol_{n-1}(K \cap a^\perp), \qquad a \in \R^n \setminus\{0\}
\end{equation}
and for origin-symmetric $K$, we have $\beta_K = \max_a V_K(a)$.

B\'ar\'any and Frankl in \cite{FB2} conjectured that for the unit cube $Q_n = [-\frac12,\frac12]^n$, the maximum of $V_{Q_n}$ is attained at diagonal vectors. This was confirmed by Aliev in \cite{A1}, 
\[
\beta_{Q_n} = V_{Q_n}((1,\dots,1)).
\]

We refine this result to a Schur-convexity statement. Let us briefly recall the notion. Throughout, $\R_+$ denotes the nonnegative real numbers. Given $a\in \R_+^n$, we denote $a^* = (a_1^*, \dots , a_n^*)$ to be the nonincreasing rearrangement of $a$, i.e.  $a_1^* \geq a_2^* \geq \dots \geq  a_n^*.$ For $a, b \in \R_+^n$, we say $a \prec b$, that is, $a$ is \textit{majorized} by $b$, if they have the same total mass, $\sum_{i=1}^n a_i = \sum_{i=1}^n b_i$ and for every $k \leq n-1$,
we have $\sum_{i=1}^k a_i^* \leq \sum_{i=1}^k b_i^*$. 
In particular, on the probability simplex, i.e. for every vector $x = (x_1, \dots, x_n) \in \R_+^n$ with $\sum_{i=1}^n x_i = 1$, we have
\[
\left(\frac{1}{n} , \dots, \frac{1}{n} \right) \prec x \prec (1, 0, 0, \dots , 0).
\]

In information theoretic language, $x \prec y$ should be interpreted to mean $x$ is a more chaotic or entropic probability mass distribution than $y$. A map $\varphi\colon D \to \R$ is said to be Schur-concave (resp. convex) on a subset $D$ of $\R_+^n$ if it is an order reversing (resp. preserving) map, i.e.: $x \prec y$ implies that $\varphi (x) \geq \varphi(y)$ (resp.$\varphi (x) \leq \varphi(y)$). On the probability simplex, a classic example would be the Shannon entropy $H(p_1, \dots, p_n) = -\sum_{i=1}^n p_i \log(p_i)$ which is maximized at the most chaotic state $\left(\frac{1}{n}, \dots, \frac{1}{n} \right)$.

For a comprehensive exposition on majorization and Schur-convexity, we refer for instance to Chapter II of Bhatia's monograph \cite{bhatia}.

In fact, not only does this refinement hold for the cube, but for all $1$-symmetric convex bodies. A~convex body $K$ in $\R^n$ is called $1$-symmetric if it is symmetric with respect to every coordinate hyperplane $\{x \in \R^n, x_j=0\}$, $j \leq n$, and $K$ is invariant under permutations of the coordinates.

\begin{theorem}\label{thm:Schur}
Let $K$ be a $1$-symmetric convex body in $\R^n$. Then the function $a \mapsto V_K(a)$ defined in \eqref{eq:VKa-def} is Schur concave on $\R_+^n$. In particular, for the chessboard cutting constant defined in \eqref{eq:VK-def}, we have $\beta_K =  \sqrt{n}\vol_{n-1}(K \cap (1,\dots,1)^\perp)$.
\end{theorem}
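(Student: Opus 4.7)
My plan is to recognise $V_K$ as the product of $\|a\|_1$ with the radial function of the intersection body of $K$, and then invoke Busemann's theorem together with a standard majorization-closedness property of symmetric convex sets. Let $IK$ denote the intersection body of $K$, i.e.\ the star body whose radial function satisfies $\rho_{IK}(u)=\vol_{n-1}(K\cap u^\perp)$ on the unit sphere, extended $(-1)$-homogeneously to $\R^n\setminus\{0\}$. Then the defining formula \eqref{eq:VKa-def} becomes
\[
V_K(a) \;=\; \|a\|_1\,\rho_{IK}(a), \qquad a\in\R^n\setminus\{0\}.
\]
Since the relation $a\prec b$ forces $\|a\|_1=\|b\|_1$, Schur-concavity of $V_K$ on $\R_+^n$ is equivalent to Schur-concavity of $\rho_{IK}$ on $\R_+^n$.

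Next I would verify the two structural facts this reduction needs. The $1$-symmetry of $K$ transfers verbatim to $IK$: for every signed coordinate permutation $\sigma$ (an orthogonal map fixing $K$), $\vol_{n-1}(K\cap u^\perp)=\vol_{n-1}(\sigma K\cap(\sigma u)^\perp)=\vol_{n-1}(K\cap(\sigma u)^\perp)$, so $\rho_{IK}$ is $\sigma$-invariant. By Busemann's theorem, applied to the centrally symmetric convex body $K$, $IK$ is itself convex. Hence $IK$ is a $1$-symmetric convex body.

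With this in hand I would invoke the following general fact: for any $1$-symmetric convex body $L\subset\R^n$, the radial function $\rho_L$ is Schur-concave on $\R_+^n$. Indeed, for $c>0$ the super-level set $\{\rho_L\geq c\}\cap\R_+^n$ equals $(1/c)(L\cap\R_+^n)$, which is convex and invariant under coordinate permutations. By the Hardy-Littlewood-Polya theorem any $a'\prec a$ in $\R_+^n$ is a convex combination of coordinate permutations of $a$; if $a\in L$, each such permutation belongs to $L$ by $1$-symmetry, and convexity places $a'$ in $L$ too. This closedness of super-level sets under majorization is exactly Schur-concavity of $\rho_L$. Applied to $L=IK$, it finishes the proof that $V_K$ is Schur-concave on $\R_+^n$.

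The main obstacle is the convexity of $IK$: without Busemann's theorem the super-level sets of $\rho_{IK}$ need not be convex, and the Birkhoff-Hardy-Littlewood-Polya argument would collapse. Everything else is bookkeeping with permutations and the definition of $\prec$. Finally, for the value of $\beta_K$: since $V_K$ is $1$-symmetric and $0$-homogeneous, its maximum on $\R^n\setminus\{0\}$ is attained in $\R_+^n$, and Schur-concavity forces that maximum onto the most majorized direction $(1,\ldots,1)$; substituting gives $\|(1,\ldots,1)\|_1/|(1,\ldots,1)|=\sqrt n$, so $\beta_K=\sqrt n\,\vol_{n-1}(K\cap(1,\ldots,1)^\perp)$.
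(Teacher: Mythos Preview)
Your proof is correct and is essentially the same argument as the paper's, just phrased in the language of intersection bodies and radial functions rather than the Busemann norm: the paper works directly with the norm $N_K(a)=|a|/\vol_{n-1}(K\cap a^\perp)=1/\rho_{IK}(a)$, observes it is convex (Busemann) and permutation-symmetric, and applies the Hardy--Littlewood--P\'olya decomposition $x=\sum_\sigma\lambda_\sigma y_\sigma$ together with Jensen to get $N_K(x)\le N_K(y)$. Your super-level-set argument for $\rho_{IK}$ is the dual reformulation of the same step, since $IK$ is precisely the unit ball of $N_K$.
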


Our short proof crucially relies on Busemann's theorem from \cite{Bus}, combined with the symmetries of the body. In contrast, Aliev's approach from \cite{A1} employs Busemann's theorem in a further geometric argument on the plane which did not seem to allow for the present generalisation to Schur-convexity. We record Busemann's theorem for future use.

\begin{theorem}[Busemann, \cite{Bus}]\label{thm:Bus}
Let $K$ be an origin-symmetric convex body in $\R^n$. Then the function
\begin{equation}\label{eq:def-N}
N_K(x) =  \frac{|x|}{\vol_{n-1}(K \cap x^\perp)}, \qquad x \neq 0
\end{equation}
extended at $0$ by $0$ defines a norm on $\R^n$.
\end{theorem}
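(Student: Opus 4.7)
The plan is to reduce the Schur-concavity of $V_K$ to a property of the Busemann norm $N_K$ from Theorem~\ref{thm:Bus}. First I would rewrite the volume factor using the definition of $N_K$, obtaining for $a\ne 0$ the identity
\[
V_K(a) \;=\; \frac{\|a\|_1}{|a|}\cdot\frac{|a|}{N_K(a)} \;=\; \frac{\|a\|_1}{N_K(a)}.
\]
Since $K$ is origin-symmetric, Busemann's theorem applies and $N_K$ is a norm, in particular convex and positively homogeneous on $\R^n$. The $1$-symmetry of $K$ transfers directly to $N_K$: any isometry of the form $x\mapsto(\varepsilon_1 x_{\pi(1)},\dots,\varepsilon_n x_{\pi(n)})$ with $\pi$ a permutation and $\varepsilon\in\{\pm 1\}^n$ preserves $K$ and the Euclidean norm, and sends $a^\perp$ to the orthogonal complement of the image of $a$, so $N_K$ is invariant under coordinate permutations and sign changes. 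In particular, its restriction to $\R_+^n$ is a symmetric convex function.

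The second step is the classical observation that a symmetric convex function is Schur-convex (Bhatia \cite{bhatia}, Ch.~II): if $a\prec b$ in $\R_+^n$, Hardy--Littlewood--P\'olya expresses $a$ as a convex combination of permutations of $b$, and convexity plus permutation invariance of $N_K$ yield $N_K(a)\le N_K(b)$. Since majorization preserves $\|\cdot\|_1$, we have $\|a\|_1=\|b\|_1$; if both are zero there is nothing to prove, otherwise $N_K(b)>0$ and
\[
V_K(a)\;=\;\frac{\|a\|_1}{N_K(a)}\;\geq\;\frac{\|b\|_1}{N_K(b)}\;=\;V_K(b),
\]
which is precisely Schur-concavity of $V_K$ on $\R_+^n$.

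For the ``in particular'' statement, invariance of $V_K$ under coordinate sign changes (another consequence of the $1$-symmetry of $K$) gives $\beta_K=\max_{a\ne 0}V_K(a)=\max_{a\in\R_+^n\setminus\{0\}}V_K(a)$. By $0$-homogeneity we may restrict to the slice $\{a\in\R_+^n:\|a\|_1=n\}$, on which $(1,\dots,1)$ is majorized by every element; Schur-concavity then forces the maximum to be attained there, and a direct calculation gives $V_K(1,\dots,1)=\frac{n}{\sqrt{n}}\vol_{n-1}(K\cap(1,\dots,1)^\perp)=\sqrt{n}\,\vol_{n-1}(K\cap(1,\dots,1)^\perp)$. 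There is no serious obstacle in this plan: once the identity $V_K(a)=\|a\|_1/N_K(a)$ is recorded, the result is a two-line consequence of Busemann's theorem together with the standard fact that symmetric convex functions are Schur-convex. The only minor points requiring care are the clean transfer of $1$-symmetry from $K$ to $N_K$, and the reduction of the maximum of $V_K$ from all of $\R^n\setminus\{0\}$ to $\R_+^n$.
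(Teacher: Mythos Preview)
Your proposal does not address the stated theorem. Theorem~\ref{thm:Bus} is Busemann's classical result that $N_K$ is a norm; the paper does not prove it but merely cites it from \cite{Bus} as a tool. What you have written is instead a proof of Theorem~\ref{thm:Schur} (the Schur-concavity of $V_K$), which \emph{uses} Busemann's theorem as a black box. So as a proof of Theorem~\ref{thm:Bus} the proposal is simply off-target: nowhere do you establish the triangle inequality for $N_K$, which is the entire content of Busemann's theorem.

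If your intent was to prove Theorem~\ref{thm:Schur}, then your argument is correct and is essentially identical to the paper's own proof: rewrite $V_K(a)=\|a\|_1/N_K(a)$, invoke Busemann to get convexity of $N_K$, transfer the $1$-symmetry of $K$ to $N_K$, and apply the Hardy--Littlewood--P\'olya characterisation of majorisation to conclude that a symmetric convex function is Schur-convex. The paper carries out exactly these steps (writing $x=\sum_\sigma\lambda_\sigma y_\sigma$ and applying convexity plus permutation invariance of $N$), and your treatment of the ``in particular'' clause via $0$-homogeneity and the minimality of $(1,\dots,1)$ in the majorisation order also matches.
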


We also refer to Theorem 3.9 in \cite{MP} for a generalisation to lower-dimensional sections, as well as to Theorem 5 in \cite{Ball} for an extension to log-concave functions. 

With Busemann's theorem in hand, we can motivate our next result. Hyperplane sections of the  unit volume cube $Q_n = [-\frac12, \frac12]^n$ admit a curious probabilistic formula: if we let $\xi_1, \xi_2, \dots$ be i.i.d. random vectors uniform on the unit sphere $S^2$ in $\R^3$, then for a \emph{unit} vector $a \in \R^n$, we have
\[
\vol_{n-1}(Q_n \cap a^\perp) = \E\left[\left|a_1\xi_1+\dots + a_n\xi_n\right|^{-1}\right],
\]
see \cite{KK}. Thus Busemann's theorem in particular asserts that the function
\[
x \mapsto \frac{|x|}{\vol_{n-1}(Q_n \cap x^\perp)} = \frac{|x|}{\E\left[\left|\sum_{j=1}^n \frac{x_j}{|x|}\xi_j\right|^{-1}\right]} = \left(\E\left|\sum_{j=1}^n x_j\xi_j\right|^{-1}\right)^{-1}
\]
is convex on $\R^n$. A perhaps much simpler (geometrically dual) analogue of this fact is that for i.i.d. Rademacher random variables $\varepsilon_1, \varepsilon_2, \dots$ (random signs, $\p{\varepsilon_j = \pm 1} = \frac12$), the function
\[
x \mapsto \E\left| x_1\varepsilon_1 + \dots + x_n\varepsilon_n\right|
\]
is plainly convex on $\R^n$. The duality we loosely allude to here is that Busemann's theorem refers to volume of sections of the cube, whereas the quantity $\E\left| x_1\varepsilon_1 + \dots + x_n\varepsilon_n\right|$ is proportional to the volume of the orthogonal projection of the cross-polytope onto $x^\perp$ (see, e.g. the remark after (3) in \cite{Ball-wave}).

Resisting great efforts and prompting significant activity across geometric functional analysis, the conjectured logarithmic Brunn-Minkowski inequality posed in \cite{BLYZ} can be equivalently stated as a convexity property of sections of the cube (see \cite{NT}), which in particular would imply that the function
\begin{equation}\label{eq:logBM-xi}
t \mapsto -\log\left(\E\left|e^{t_1}\xi_1 + \dots + e^{t_n}\xi_n\right|^{-1}\right)
\end{equation}
is convex on $\R^n$. In light of the probabilistic formulation we note above, one can view the quantity within the arguments of the logarithm as the volume of the section of a box that is the result of coordinate-wise rescaling of a unit cube by exponential weights depending on $t$. Thus the convexity of \eqref{eq:logBM-xi} amounts to the log-concavity of hyperplane sections of such boxes. Geometrically, the conjectured logarithmic Brunn-Minkowski inequality in $\R^n$ would hold true for two bodies which are symmetric polytopes with at most $2n+2$ facets with the same sets of outer-normal vectors.

To the best of our knowledge, even this apparent ``toy-case'' remains unproved. Driven by the analogy with random signs, we establish the following result.

\begin{theorem}\label{thm:radem-log-convex-p}
Let $\varepsilon_1, \varepsilon_2, \dots$ be independent Rademacher random variables. For every $n \geq 1$ and $p \geq 1$, the function
\[
\Phi(t_1, \dots, t_n) = \log\E\left|e^{t_1}\e_1 + \dots + e^{t_n}\e_n\right|^p
\]
is convex on $\R^n$.
\end{theorem}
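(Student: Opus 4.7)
The plan is to recognise $\Phi$ as $p$ times the logarithm of a sign-symmetric norm on $\R^n$ and then establish a general log-convexity principle for such norms.

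First I would introduce $N_p(x):=\bigl(\E|x_1\e_1+\dots+x_n\e_n|^p\bigr)^{1/p}$ for $x\in\R^n$. For $p\geq 1$ this is a genuine norm on $\R^n$: it is the $L^p(\e)$ pullback of the linear map $x\mapsto\sum_ix_i\e_i$, which does not vanish identically for any $x\neq 0$. Moreover $N_p$ is invariant under sign changes of the coordinates since $-\e_i\stackrel{d}{=}\e_i$. Because $\Phi(t)=p\log N_p(e^{t_1},\dots,e^{t_n})$ and $p>0$, convexity of $\Phi$ reduces to convexity of $t\mapsto\log N_p(e^{t_1},\dots,e^{t_n})$.

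I would then prove the following general claim: for any norm $\|\cdot\|$ on $\R^n$ invariant under coordinate sign changes, the map $F(t):=\log\|(e^{t_1},\dots,e^{t_n})\|$ is convex on $\R^n$. Since $F$ is continuous, it suffices to verify midpoint convexity, which after setting $a_i=e^{s_i}$, $b_i=e^{t_i}$ amounts to the geometric estimate
\[
\bigl\|(\sqrt{a_ib_i})_{i=1}^n\bigr\|\leq\sqrt{\|a\|\cdot\|b\|},\qquad a,b\in(0,\infty)^n.
\]
By positive homogeneity I may normalise $\|a\|=\|b\|=1$. Sign-invariant norms are monotone on $\R_+^n$: if $0\leq u_i\leq v_i$ for all $i$ then $u$ lies in the box $\prod_i[-v_i,v_i]$, which is the convex hull of the sign-orbit $\{(\pm v_1,\dots,\pm v_n)\}$, each of whose vertices has norm $\|v\|$, so $\|u\|\leq\|v\|$ by the triangle inequality. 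Combining the componentwise AM-GM $\sqrt{a_ib_i}\leq(a_i+b_i)/2$ with monotonicity and the triangle inequality yields
\[
\bigl\|(\sqrt{a_ib_i})_i\bigr\|\leq\Bigl\|\tfrac{a+b}{2}\Bigr\|\leq\tfrac12\bigl(\|a\|+\|b\|\bigr)=1=\sqrt{\|a\|\cdot\|b\|}.
\]

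The only subtlety I would flag is that the pre-normalisation estimate $\|\sqrt{ab}\|\leq(\|a\|+\|b\|)/2$ is in general strictly weaker than the geometric bound required for log-convexity; the normalisation $\|a\|=\|b\|=1$ is essential, since only there do the arithmetic and geometric means of $\|a\|$ and $\|b\|$ coincide, so that the triangle inequality actually delivers the geometric bound. Apart from this step, the argument is purely elementary once the identification of $\Phi/p$ with the logarithm of the Rademacher $L^p$-norm is made, and I do not anticipate any serious obstacles.
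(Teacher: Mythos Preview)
Your argument is correct, and it takes a genuinely different route from the paper. The paper works via H\"older duality: writing $\|X_t\|_p=\max_{Y\in B_q}\E X_tY$, it shows that the maximiser $Y_*(t)=\sgn(X_t)|X_t|^{p-1}/\|X_t\|_p^{p-1}$ has $\E[Y_*(t)\e_j]\geq 0$ for every $j$, so the maximum may be restricted to the set $A_q$ of $Y$'s with all such correlations nonnegative; then $\frac1p\Phi(t)=\max_{Y\in A_q}\log\sum_j e^{t_j}\E[Y\e_j]$ is a pointwise supremum of log-sum-exp functions with nonnegative coefficients, hence convex. Your approach instead abstracts away the probabilistic structure entirely: you observe that $N_p$ is a sign-invariant norm and prove that $t\mapsto\log\|(e^{t_1},\dots,e^{t_n})\|$ is convex for \emph{any} such norm, via monotonicity on $\R_+^n$, coordinatewise AM--GM, and the normalisation trick. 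This is both more elementary (no duality, no computation of the optimal $Y$) and strictly more general: it immediately yields the Hilbert-space extension (Corollary~\ref{cor:hilbert}) and the symmetric-random-variable extension (Corollary~\ref{cor:all-sym}), since in each case the relevant $L^p$-seminorm is sign-invariant. What the paper's approach buys in return is an explicit representation of $\|X_t\|_p$ as a supremum of nonnegative linear forms in $(e^{t_j})_j$, which is of independent interest and parallels the representation in Lemma~\ref{lm:rep}.
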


Our proof leverages the usual H\"older duality, but nontrivially restricted to random variables having nonnegative correlations with the random signs.

We present the proofs in the next section. The final section is devoted to further remarks. In particular, with the same method, we obtain an extension of Aliev's result from \cite{A2}. We make precise the alluded geometric duality and a connection of our Theorem \ref{thm:radem-log-convex-p} to Saroglou's result from \cite{Sar2}.

\paragraph{Acknowledgements.} 
We should very much like to thank an anonymous referee for their careful reading of the manuscript and helpful suggestions.

\section{Proofs}

\subsection{Proof of Theorem \ref{thm:Schur}}

First note that by the symmetries of $K$, function $V_K$ is also symmetric (under permuting the coordinates of the input), as well as unconditional, that is $V_K(a_1, \dots, a_n) = V_K(|a_1|,\dots, |a_n|)$. Fix $x, y \in \R_+^n$ such that $x \prec y$, that is $y$ majorizes $x$. In particular, $\|x\|_1 = \|y\|_1$. Thus, to show $V_K(x) \geq V_K(y)$, equivalently, we would like to show that
\[
\frac{1}{|x|}\vol_{n-1}(K \cap x^\perp) \geq \frac{1}{|y|}\vol_{n-1}(K \cap y^\perp),
\]
that is $N(x) \leq N(y)$ with
\[
N(a) = \frac{|a|}{\vol_{n-1}(K \cap a^\perp)}.
\]
By Theorem \ref{thm:Bus}, $N$ is convex. By the symmetries of $K$, function $N$ is symmetric. Since $x \prec y$, we have  $x = \sum_{\sigma} \lambda_{\sigma} y_{\sigma}$ for some nonnegative weights $\lambda_{\sigma}$ adding up to $1$, where the sum is over all permutations $\sigma$ of $\{1, \dots, n\}$ and $y_{\sigma} = (y_{\sigma(1)}, \dots, y_{\sigma(n)})$. By the convexity of $N$ and its symmetry,
\[
N(x) = N\left(\textstyle\sum \lambda_\sigma y_\sigma\right)  \leq \sum \lambda_\sigma N(y_\sigma) = \left(\sum \lambda_\sigma\right)N(y) = N(y).
\]
This finishes the proof.$\hfill\square$

\subsection{Proof of Theorem \ref{thm:radem-log-convex-p}}

Fix $p \geq 1$ and let $q \in [1,\infty]$ be its conjugate, $\frac{1}{p} + \frac{1}{q} = 1$. Let $B_q$ be the closed unit ball in $L_q$ (of the underlying probability space with the norm $\|Y\|_q = (\E|Y|^q)^{1/q}$). For $t \in \R^n$, we denote
\[
X_t = \sum_j e^{t_j}\e_j.
\]
Thanks to H\"older's inequality, we have
\[
\left\|X_t\right\|_p = \max_{Y \in B_q} \E X_tY, \qquad t \in \R^n,
\]
with the maximum attained at
\[
Y_*(t) = \frac{1}{\|X_t\|_p^{p-1}}\sgn(X_t)|X_t|^{p-1}.
\]
The main idea is to consider the subset $A_q$ of $B_q$ of random variables with nonnegative correlations with all $\e_j$,
\[
A_q = \left\{Y \in B_q, \ \E [Y\e_j] \geq 0, \ j = 1, \dots, n\right\}.
\]

\textbf{Claim.} $Y_*(t) \in A_q$, for every $t \in \R^n$.

As a result,
\[
\left\|X_t\right\|_p = \max_{Y \in A_q} \E X_tY, \qquad t \in \R^n,
\]
which allows to finish the proof in one line. We have,
\[
\frac{1}{p}\Phi(t) = \log \E\|X_t\|_p = \max_{Y \in A_q} \log\E X_tY = \max_{Y \in A_q} \log\left(\sum_{j=1}^n e^{t_j}\E [Y\e_j]\right)
\]
Functions $t \mapsto \log\sum_{j=1}^n e^{t_j}\E [Y\e_j]$ are convex (as sums of log-convex functions are log-convex), so their pointwise maximum over $Y \in A_q$ is also convex.

\begin{proof}[Proof of the claim]
Let $f(x) = \sgn(x)|x|^{p-1}$ which is nondecreasing. Fix $j \leq n$ and note that evaluating the expectation against $\e_j$ gives
\[
\|X_t\|_p^{p-1}\E [Y_*(t)\e_j] = \E[f(X_t)\e_j] = \frac{1}{2}\E\left[f\left(e^{t_j}+\sum_{i \neq j} e^{t_i}\e_i \right) - f\left(-e^{t_j}+\sum_{i \neq j} e^{t_i}\e_i \right)\right].
\]
The square bracket is nonnegative as $f(v+u) \geq f(v-u)$ for every $u \geq 0$ and $v \in \R$, by monotonicity.
\end{proof}

\begin{remark}
We have crucially used that the class of log-convex functions is closed under summation, or more generally, if $\{f_\alpha(x)\}_{\alpha \in \mathcal{A}}$ is a family of log-convex functions on, say $\R^n$, then the function 
\begin{equation}\label{eq:sum-log-convex}
x \mapsto \int_{\mathcal{A}} f_\alpha(x) \dd\mu(\alpha)
\end{equation}
is also log-convex on $\R^n$, where $\mu$ is a nonnegative measure on $\mathcal{A}$. This readily follows from H\"older's inequality. As a result, Theorem \ref{thm:radem-log-convex-p} instantly extends to sums of independent symmetric random variables (a random variable $X$ is symmetric if $-X$ and $X$ have the same distribution).
\end{remark}

\begin{corollary}\label{cor:all-sym}
Let $X_1, X_2, \dots$ be independent symmetric random variables. For every $n \geq 1$ and $p \geq 1$, the function
\[
\Phi(t_1, \dots, t_n) = \log\E\left|e^{t_1}X_1 + \dots + e^{t_n}X_n\right|^p
\]
is convex on $\R^n$.
\end{corollary}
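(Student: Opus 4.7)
The plan is to reduce to Theorem \ref{thm:radem-log-convex-p} by conditioning on the absolute values of the $X_i$'s, and then invoke the closure of log-convex functions under integration recorded in the remark.

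The key structural fact is that every symmetric random variable $X_i$ admits the representation $X_i \stackrel{d}{=} \varepsilon_i |X_i|$, where $\varepsilon_i$ is a Rademacher random variable independent of $|X_i|$ (with an auxiliary independent sign if $X_i = 0$ has positive probability). Taking the $X_i$ independent, we obtain that $(X_1,\dots,X_n)$ has the same law as $(\varepsilon_1 a_1,\dots, \varepsilon_n a_n)$, where the $\varepsilon_j$ are i.i.d.\ Rademacher independent of $(a_1,\dots,a_n)=(|X_1|,\dots,|X_n|)$ whose joint law we call $\mu$. Conditioning first on $(|X_j|)_j$ thus gives
\[
\E\Bigl|\sum_{j=1}^n e^{t_j}X_j\Bigr|^p = \int_{[0,\infty)^n} f_a(t)\,\dd\mu(a), \qquad f_a(t) := \E\Bigl|\sum_{j=1}^n e^{t_j}a_j\varepsilon_j\Bigr|^p.
\]

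Next I claim that for every fixed $a \in [0,\infty)^n$, the function $t \mapsto f_a(t)$ is log-convex on $\R^n$. Let $J = \{j : a_j > 0\}$. For $j \in J$, substitute $s_j = t_j + \log a_j$, an affine change of variables, so that
\[
f_a(t) = \E\Bigl|\sum_{j\in J} e^{s_j}\varepsilon_j\Bigr|^p,
\]
and by Theorem \ref{thm:radem-log-convex-p} applied on $\R^{|J|}$, this is log-convex in $(s_j)_{j\in J}$, hence log-convex in $(t_j)_{j\in J}$ and constant (so trivially log-convex) in the remaining coordinates. In the degenerate case $J=\emptyset$ we simply have $f_a \equiv 0$, contributing nothing to the integral.

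Finally, the remark following Theorem \ref{thm:radem-log-convex-p} tells us that the integral in \eqref{eq:sum-log-convex} of a family of log-convex functions against a nonnegative measure is itself log-convex. Applying this to our family $\{f_a\}_{a \in [0,\infty)^n}$ integrated against $\mu$ yields that $t \mapsto \E|\sum e^{t_j}X_j|^p$ is log-convex on $\R^n$, i.e., $\Phi$ is convex, as desired. The only technical point worth noting is finiteness: if every $X_j$ has finite $p$-th moment, all quantities are finite; otherwise $\Phi \equiv +\infty$ and convexity holds trivially. No further obstacle arises because the heavy lifting is done entirely by Theorem \ref{thm:radem-log-convex-p} together with the integration property of log-convexity.
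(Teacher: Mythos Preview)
Your proof is correct and follows essentially the same approach as the paper: represent each symmetric $X_j$ as $\varepsilon_j|X_j|$, condition on the absolute values, apply Theorem~\ref{thm:radem-log-convex-p} fibrewise, and then invoke \eqref{eq:sum-log-convex} with $\mu$ the law of $(|X_1|,\dots,|X_n|)$. You have simply fleshed out the details (the affine change of variables, the handling of zero coordinates, and the finiteness remark) that the paper leaves implicit.
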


For the proof, note that by the symmetry of the $X_j$, they have the same distribution as $\e_j|X_j|$, respectively, where $\e_1, \dots, \e_n$ are independent Rademacher random variables (independent of the $X_j$). Thus, it suffices to use \eqref{eq:sum-log-convex} with $\mu$ given by the distribution of $(|X_1|,\dots, |X_n|)$. 

In a similar vein, Corollary \ref{cor:all-sym} remains trivially true if $X_i$ are assumed to be nonnegative random variables since a sum of log-convex functions is log-convex.

\section{Concluding remarks}

\subsection{Monotonicity under $\ell_\infty$ normalisation}

Aliev in Lemma 2 in \cite{A2} showed that for the unit cube $Q_n$, its Busemann norm $N_{Q_n}(x) = \frac{|x|}{\vol_{n-1}(Q_n \cap x^\perp)}$ (see Theorem \ref{thm:Bus}) is maximised over the unit $\ell_\infty$-sphere at its vertices. Since the maximum of a convex function over a convex body is attained at an extreme point, Aliev's lemma extends in such a statement to all origin-symmetric convex bodies. Moreover, since an even convex function on the real line is nondecreasing, for $1$-symmetric bodies we obtain a stronger monotonicity property.

\begin{theorem}\label{thm:l_infty}
Let $K$ be a $1$-symmetric convex body in $\R^n$. Then the function $x \mapsto N_K(x)$ defined in \eqref{eq:def-N} is monotone with respect to each coordinate on $\R_+^n$. In particular, $\max_{x \in [0,1]^n} N_K(x) = N_K(1,\dots, 1)$.
\end{theorem}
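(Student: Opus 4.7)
The plan is to proceed directly from Busemann's theorem together with the unconditionality that 1-symmetry gives us. By Theorem \ref{thm:Bus}, $N_K$ is a norm on $\R^n$; in particular, it is a convex function. The 1-symmetry of $K$ means that the reflection $\sigma_j$ in the coordinate hyperplane $\{x_j=0\}$ preserves $K$, and since $\sigma_j$ is a linear isometry that maps $x^\perp$ to $(\sigma_j x)^\perp$, we get $\vol_{n-1}(K\cap x^\perp)=\vol_{n-1}(K\cap (\sigma_j x)^\perp)$ and $|x|=|\sigma_j x|$. Thus $N_K$ is unconditional: invariant under sign changes in any coordinate.

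Next I would fix $j\leq n$ and an arbitrary $(x_i)_{i\neq j}\in\R_+^{n-1}$, and consider the one-variable slice
\[
\varphi(t) = N_K(x_1,\dots,x_{j-1},t,x_{j+1},\dots,x_n),\qquad t\in\R.
\]
This $\varphi$ is convex (as a restriction of a convex function to a line) and even (by unconditionality in the $j$-th coordinate). The key elementary observation is that any convex even function on $\R$ is nondecreasing on $[0,\infty)$: indeed, for $0\leq s<t$, writing $s=\frac{t-s}{2t}(-t)+\frac{t+s}{2t}t$ and applying convexity together with $\varphi(-t)=\varphi(t)$ gives $\varphi(s)\leq\varphi(t)$. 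This immediately yields the coordinate-wise monotonicity of $N_K$ on $\R_+^n$.

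For the ``in particular'' assertion, coordinate-wise monotonicity applied iteratively gives $N_K(x_1,\dots,x_n)\leq N_K(1,x_2,\dots,x_n)\leq\dots\leq N_K(1,\dots,1)$ for every $x\in[0,1]^n$, so the maximum is attained at the all-ones vertex.

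I do not anticipate a real obstacle: the argument is essentially the remark the author makes just before the statement (``an even convex function on the real line is nondecreasing''), combined with the two ingredients already isolated earlier in the paper, namely Busemann's convexity of $N_K$ and the invariance properties of $V_K$/$N_K$ coming from the 1-symmetry of $K$ that were used in the proof of Theorem \ref{thm:Schur}.
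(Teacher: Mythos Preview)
Your proposal is correct and follows exactly the approach the paper indicates in the paragraph preceding the theorem: Busemann's convexity of $N_K$, unconditionality from the symmetries of $K$, and the elementary fact that an even convex function of one real variable is nondecreasing on $[0,\infty)$. You have simply written out in full the details that the paper leaves implicit.
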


\subsection{Dual logarithmic Brunn-Minkowski inequality}

Saroglou's dual log Brunn-Minkowski inequality, Theorem 6.2 from \cite{Sar2}, essentially states that the $(n-1)$-volume of the polytope
\[
P_t = \text{conv}\left\{\pm e^{t_j}\text{Proj}_{(1,\dots,1)^\perp}e_j, \ j \leq n\right\} = \text{Proj}_{(1,\dots,1)^\perp}\text{conv}\{\pm e^{t_j}e_j, \ j \leq n\}
\]
is log-convex. As usual, for a subspace $H$ in $\R^n$, $\text{Proj}_H$ denotes the orthogonal projection onto $H$. On the other hand, the $2^n$ facets of the stretched cross-polytope $\text{conv}\{\pm e^{t_j}e_j\}$ are all congruent with the outer normal vectors $\left(\sum_j e^{-2t_j}\right)^{-1/2}[\e_je^{-t_j}]_{j=1}^n$, $\e \in \{-1,1\}$ and the $(n-1)$-volume $n\left(\sum_j e^{-2t_j}\right)^{1/2}e^{\sum t_j}$, so from Cauchy's formula (see for instance, \cite{Gar, NT-surv}), we get
\[
\vol_{n-1}(P_t) = 2^{n-1}ne^{\sum t_j}\E\left|\sum_j e^{-t_j}\e_j\right|.
\]
Thus the convexity of the function
\begin{equation}\label{eq:convexityL1sum}
t \mapsto \log\E\left|\sum_j e^{t_j}\e_j\right|
\end{equation}
is a special case of Saroglou's result. In that sense, Theorem \ref{thm:radem-log-convex-p} can be viewed as a probabilistic extension of the dual logarithmic Brunn-Minkowski inequality. In analogy to Theorem \ref{thm:radem-log-convex-p}, we thus conjecture that the following extension of \eqref{eq:logBM-xi} holds: for every $0 < q  \leq 1$, the function
\[
t \mapsto -\log\left(\E\left[\left| e^{t_1}\xi_1 + \dots + e^{t_n}\xi_n\right|^{-q}\right]\right)
\]
is convex on $\R^n$.

\subsection{A vector-valued extension}

As an immediate corollary to Theorem \ref{thm:radem-log-convex-p}, we obtain its extension to vector-valued coefficients in Hilbert space.

\begin{corollary}\label{cor:hilbert}
Let $\e_1, \e_2, \dots$ be independent Rademacher random variables. Let $H$ be a separable Hilbert space with norm $\|\cdot\|$. Let $p \geq 1$ and $v_1, \dots, v_n$ be vectors in $H$. Then
\[
\Phi(t_1, \dots, t_n) = \log\E\left\|e^{t_1}\e_1v_1 + \dots + e^{t_n}\e_nv_n\right\|^p
\]
is convex on $\R^n$.
\end{corollary}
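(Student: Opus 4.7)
The plan is to reduce the vector-valued statement to the scalar Rademacher setting by integrating out the norm against a Gaussian. First I would restrict attention to the finite-dimensional subspace $H_0 = \mathrm{span}(v_1, \dots, v_n) \subseteq H$, which loses nothing because the random vector $\sum_j e^{t_j}\e_j v_j$ lives in $H_0$ almost surely. On $H_0$, pick a standard Gaussian vector $g$, independent of the $\e_j$, and use the identity
\[
\|x\|^p = c_p^{-1} \E_g |\scal{x}{g}|^p, \qquad x \in H_0,
\]
where $c_p = \E|Z|^p$ with $Z \sim N(0,1)$. This holds because $\scal{x}{g} \sim \|x\| \cdot Z$ by rotational invariance of the Gaussian law in $H_0$.

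Next, by nonnegativity and Fubini, one obtains
\[
\E_\e \left\|\textstyle\sum_j e^{t_j}\e_j v_j\right\|^p = c_p^{-1} \,\E_g\, \E_\e \left|\textstyle\sum_j e^{t_j}\e_j \scal{v_j}{g}\right|^p.
\]
For each fixed realisation of $g$, the scalars $X_j = \e_j \scal{v_j}{g}$ are independent symmetric random variables, so Corollary \ref{cor:all-sym} applies and tells us that the inner expectation
\[
t \mapsto \E_\e \left|\textstyle\sum_j e^{t_j} X_j\right|^p
\]
is log-convex on $\R^n$.

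To conclude, I would invoke the principle recorded around \eqref{eq:sum-log-convex}: integration of log-convex functions against a nonnegative measure preserves log-convexity. Applied with the law of $g$ playing the role of $\mu$, this yields that $t \mapsto \E_g \E_\e |\sum_j e^{t_j}\e_j \scal{v_j}{g}|^p$ is log-convex on $\R^n$, hence $\Phi$ is convex. The only real subtlety is bookkeeping: justifying the Fubini swap (automatic by nonnegativity) and confirming that passing to the finite-dimensional $H_0$ lets us use a genuine Gaussian $g$ rather than a cylindrical one, so that Corollary \ref{cor:all-sym} applies verbatim.
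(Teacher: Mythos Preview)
Your proof is correct and essentially identical to the paper's: both represent the Hilbert norm via a Gaussian inner product, condition on the Gaussian to reduce to the scalar Rademacher case, and then integrate using the preservation of log-convexity under mixtures. The only cosmetic differences are that you first restrict to the finite-dimensional span of the $v_j$ (a helpful clarification that sidesteps the cylindrical-Gaussian issue), and you invoke Corollary~\ref{cor:all-sym} rather than Theorem~\ref{thm:radem-log-convex-p} for the conditional step.
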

\begin{proof}
We use a standard embedding (see, e.g. Remark 3 in \cite{Sza}): we fix an orthonormal basis in $H$, say $(u_k)_{k \geq 1}$, take i.i.d. standard Gaussian random variables $g_1, g_2, \dots$, independent of the $\varepsilon_j$ and set $G = \sum_{k \geq 1} g_ku_k$ to have
\[
\|x\|^p = \frac{1}{\E|g_1|^p}\E\left|\scal{x}{G}\right|^p, \qquad x \in H.
\]
This gives that
\[
\Phi(t_1, \dots, t_n) = -\log \E|g_1|^p + \log\E_G\E_\e \left|\sum_{j=1}^n e^{t_j}\scal{v_j}{G}\e_j\right|^p.
\]
The result follows from Theorem \ref{thm:radem-log-convex-p}, for conditioned on the value of $G$, the function $t \mapsto \E_\e \left|\sum_{j=1}^n e^{t_j}\scal{v_j}{G}\e_j\right|^p$ is log-convex and sums of log-convex functions are log-convex.
\end{proof}

\subsection{A Representation as a maximum}

We finish with an elementary representation of the $L_1$ norm of Rademacher sums as a maximum of linear forms with nonnegative ordered coefficients. This besides being of independent interest gives an alternative proof of the convexity of  \eqref{eq:convexityL1sum}, as explained at the end of this subsection. For $n \geq 1$, we let
\[
T_n = \{x \in \R^n, \ x_1 \geq x_2 \geq \dots x_n \geq 0\}
\]
be the cone in $\R^n$ of nonincreasing nonnegative sequences.

\begin{lemma}\label{lm:rep}
For every $n \geq 1$, there is a finite subset $A_n$ of $T_n$ such that for all $x \in T_n$, we have
\[
\E\left|\sum_{j=1}^n x_j\e_j\right| = \max_{a \in A_n} \ \sum_{j=1}^n a_jx_j.
\]
\end{lemma}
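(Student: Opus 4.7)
The plan is to exhibit $A_n$ explicitly as the set of coefficient vectors arising from the ``obviously optimal'' sign choices in a variational formula for $\E|\sum_j x_j\e_j|$. The starting point is the elementary identity $|u| = \max_{s \in \{-1,1\}} su$ applied to $u = \sum_j \e_j x_j$. Since a maximum over functions $s : \{-1,1\}^n \to \{-1,1\}$ may be taken inside the expectation,
\[
\E\left|\sum_{j=1}^n x_j \e_j\right| \;=\; \max_{s:\{-1,1\}^n \to \{-1,1\}} \sum_{j=1}^n x_j\, \E[s(\e)\e_j],
\]
which already exhibits $f(x) := \E|\sum_j x_j\e_j|$ as a maximum of finitely many linear forms $\langle a^s, x\rangle$ with $a^s_j := \E[s(\e)\e_j]$.

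For a generic $x$ (i.e.\ with $\sum_j \e_j x_j \neq 0$ for every $\e$), the maximizer is $s_x(\e) := \sgn(\sum_j \e_j x_j)$, which is constant on each connected component of the complement of the hyperplane arrangement $\bigcup_\e \{x : \sum_j \e_j x_j = 0\}$. I would set
\[
A_n := \bigl\{a^{s_x} : x \in \mathrm{int}(T_n),\ \textstyle\sum_j \e_j x_j \neq 0\ \text{for all } \e\bigr\},
\]
a finite set since it has one element per component that meets $\mathrm{int}(T_n)$. For any generic $x \in T_n$ one has $f(x) = \langle a^{s_x}, x\rangle \leq \max_{a \in A_n}\langle a, x\rangle$, together with the reverse inequality $\langle a^s, x\rangle = \E[s(\e)\sum_j \e_j x_j] \leq f(x)$; continuity of both sides then extends the identity from a dense subset to all of $T_n$.

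The main obstacle, and the heart of the argument, is to verify that $A_n \subseteq T_n$, i.e.\ that the coordinates of each $a = a^{s_x}$ are nonincreasing and nonnegative. For $1 \leq j \leq n-1$ write
\[
a_j - a_{j+1} = \E\bigl[s_x(\e)(\e_j - \e_{j+1})\bigr],
\]
and pair each $\e$ with the $\e'$ obtained by swapping coordinates $j$ and $j+1$. Pairs with $\e_j = \e_{j+1}$ contribute zero; for pairs with $\e_j = 1,\ \e_{j+1} = -1$, the difference $\sum_i \e_i x_i - \sum_i \e'_i x_i = 2(x_j - x_{j+1})$ is $\geq 0$ on $T_n$, forcing $s_x(\e) \geq s_x(\e')$ so that each surviving pair contributes nonnegatively. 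The nonnegativity $a_n \geq 0$ follows from the analogous pairing via flipping $\e_n$: only $\e$ with $\e_n = 1$ contribute, and $\sum_i \e_i x_i - \sum_i \e^{(n)}_i x_i = 2x_n \geq 0$ again forces $s_x(\e) \geq s_x(\e^{(n)})$. Once these two pairing computations are in place, the lemma is proved.
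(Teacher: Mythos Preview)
Your proof is correct and follows essentially the same approach as the paper. Both define the coefficient vectors via $a_j(x)=\E[\e_j\sgn(\sum_i x_i\e_i)]$, argue finiteness of the resulting set, establish the representation formula by the trivial upper bound $\E[s(\e)\sum_j x_j\e_j]\le f(x)$ together with equality at $s=s_x$, and prove $A_n\subset T_n$ by exactly the same pairing arguments (swap $\e_j\leftrightarrow\e_{j+1}$ for monotonicity, flip $\e_n$ for nonnegativity). The only cosmetic difference is that the paper allows $\sgn(0)=0$ and sets $A_n=\alpha(T_n)$ directly, whereas you restrict to generic $x\in\mathrm{int}(T_n)$ and close by continuity; both are fine.
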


\begin{proof}
Changing the order of summation, we write
\[
\E\left|\sum_{j=1}^n x_j\e_j\right| = \E\left[\sgn\left(\sum_{j=1}^n x_j\e_j\right)\left(\sum_{j=1}^n x_j\e_j\right)\right] = \sum_{j=1}^n x_j\E\left[\e_j\sgn\left(\sum_{i=1}^n x_i\e_i\right)\right],
\]
where we use the standard signum function, $\sgn(t) = |t|/t$, $t \neq 0$, $\sgn(0) = 0$ which is odd and nondecreasing. 
It is thus natural to define the function $\alpha = (\alpha_1, \dots, \alpha_n)\colon \R^n \to \R^n$,
\[
\alpha_j(x) = \E\left[\e_j\sgn\left(\sum_{i=1}^n x_i\e_i\right)\right].
\]
Note that since $\sgn(\cdot)$ is odd, we have
\[
\alpha_j(x) = \E\sgn\left(x_j + \sum_{i \neq j} x_i\e_i\right), \qquad x \in \R^n.
\]
We set
\[
A_n = \alpha(T_n)
\]
and to finish the proof, we claim that
\begin{enumerate}[(1)]
\item\label{C1} $A_n$ is a finite set,

\item\label{C2} $A_n \subset T_n$,

\item\label{C3} $\E|\sum_{j=1}^n x_j\e_j| = \max_{a \in A_n} \ \sum_{j=1}^n a_jx_j$, for every $x \in T_n$.
\end{enumerate}

Claim \eqref{C1} holds because $\alpha_j(x)$ takes only finitely many values (for any $x$, $\alpha_j(x)$ is a sum of $2^n$ terms, each equal to $\pm \frac{1}{2^n}$ or $0$).

To show \eqref{C2}, we fix $x \in T_n$ and $1 \leq k \leq n-1$. To argue that $\alpha_k(x) \geq \alpha_{k+1}(x)$, we write
\begin{align*}
\alpha_k(x) - \alpha_{k+1}(x) &= \E\sgn\left(x_k + \e_{k+1}x_{k+1} +  \sum_{i \neq k, k+1} x_i\e_i\right) - \E\sgn\left(x_{k+1} + \e_{k}x_{k} +  \sum_{i \neq k, k+1} x_i\e_i\right) \\
&= \E\left[\sgn\left(x_k -x_{k+1} +  \sum_{i \neq k, k+1} x_i\e_i\right) - \E\sgn\left(x_{k+1}  -x_{k} +  \sum_{i \neq k, k+1} x_i\e_i\right)\right]
\end{align*}
and the monotonicity of $\sgn(\cdot)$ finishes the argument. We also need to show that $\alpha_n(x) \geq 0$. Taking the expectation with respect to $\varepsilon_n$ in the definition of $\alpha_n$, we have
\[
\alpha_n(x) = \frac12\E\left[\sgn\left(x_n + \sum_{i < n} x_i\e_i\right) - \sgn\left(-x_n + \sum_{i < n} x_i\e_i\right)\right]
\]
and the expression inside the expectation is nonnegative because $\sgn(v+u) \geq \sgn(v-u)$ for every $u \geq 0$ and $v \in \R$, by monotonicity.

Finally, to prove \eqref{C3}, we fix $x \in T_n$, take arbitrary $a \in A_n$, say $a = \alpha(y)$ with $y \in T_n$ and note that
\begin{align*}
\sum_j a_jx_j = \sum_j \alpha_j(y)x_j &= \sum_j \E\left[\e_j\sgn\left(\sum_i y_i\e_i\right)\right]x_j \\
&= \E\left[\sgn\left(\sum_i y_i\e_i\right)\left(\sum_j x_j\e_j\right)\right] \\
&\leq \E\left|\sum_j x_j\e_j\right|
\end{align*}
proving that $\max_{a \in A_n} a_j x_j \leq \E|\sum x_j\e_j|$ with the equality plainly attained for $a = \alpha(x)$.
\end{proof}

If we now account for all possible orderings by taking the maximum over all permutations in the symmetric group $S_n$ on $\{1, \dots, n\}$, we obtain a representation for arbitrary coefficients.

\begin{corollary}\label{cor:full-rep}
Let $n \geq 1$ and let $A_n$ be the finite subset provided by Lemma \ref{lm:rep}. For every $x \in \R_+^n$, we have
\[
\E\left|\sum_{j=1}^n x_j\e_j\right| = \max_{a \in A_n, \sigma \in S_n} \ \sum_{j=1}^n a_jx_{\sigma(j)}.
\]
\end{corollary}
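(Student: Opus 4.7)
The plan is to reduce the general case $x \in \R_+^n$ to the ordered case $x \in T_n$ by exploiting the fact that the distribution of the Rademacher sum $\sum_j x_j\e_j$ is invariant under permutations of the coefficients, and then apply Lemma \ref{lm:rep} directly. Concretely, given $x \in \R_+^n$, I pick a permutation $\tau \in S_n$ such that $(x_{\tau(1)}, \dots, x_{\tau(n)}) \in T_n$. Since $(\e_1,\dots,\e_n)$ and $(\e_{\tau(1)},\dots,\e_{\tau(n)})$ have the same joint distribution (i.i.d.\ $\pm 1$), the equality $\E|\sum_j x_j\e_j| = \E|\sum_j x_{\tau(j)}\e_j|$ is immediate, and Lemma \ref{lm:rep} evaluates the right-hand side as $\max_{a \in A_n}\sum_j a_j x_{\tau(j)}$. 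In particular this quantity is bounded above by $\max_{a \in A_n, \sigma \in S_n}\sum_j a_j x_{\sigma(j)}$.

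For the reverse inequality, I would fix an arbitrary $\sigma \in S_n$ and an arbitrary $a = \alpha(y) \in A_n$ with $y \in T_n$, and simply rerun step \eqref{C3} of the proof of Lemma \ref{lm:rep}, this time with $x_{\sigma(\cdot)}$ playing the role of $x$:
\[
\sum_{j=1}^n a_j x_{\sigma(j)} = \E\!\left[\sgn\!\left(\sum_i y_i\e_i\right)\sum_{j=1}^n x_{\sigma(j)}\e_j\right] \leq \E\left|\sum_{j=1}^n x_{\sigma(j)}\e_j\right| = \E\left|\sum_{j=1}^n x_j\e_j\right|,
\]
where the last equality again invokes permutation invariance. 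Combining the two estimates gives the claimed identity, with equality attained at the specific pair $(a,\sigma) = (\alpha(x_\tau),\tau)$.

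There is no real obstacle here: the corollary is essentially a bookkeeping exercise layering permutation symmetry on top of Lemma \ref{lm:rep}. The only thing worth highlighting is that because $A_n \subset T_n$, every element of $A_n$ has nonnegative, nonincreasing coordinates, so the maximum in the corollary has the natural interpretation as pairing a sorted sign-vector $a \in A_n$ with an arbitrary rearrangement of the magnitudes $x_{\sigma(j)}$; the optimizing $\sigma$ will be the one that sorts $x$ in nonincreasing order, mirroring the rearrangement inequality.
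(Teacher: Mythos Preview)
Your proposal is correct and follows essentially the same route as the paper: sort $x$ by a permutation, apply Lemma \ref{lm:rep} to the sorted vector, and then check the reverse inequality. The only cosmetic difference is that the paper handles the reverse inequality via the rearrangement inequality (using that $A_n \subset T_n$ and that $x_{\sigma^*}$ is nonincreasing), while you instead rerun the H\"older-type bound from step \eqref{C3} of the lemma's proof; both arguments are valid and equally short.
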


\begin{proof}
Fix $x \in \R_+^n$ and let $\sigma^*$ be a permutation such that 
\[x_{\sigma^*(1)} \geq \dots \geq x_{\sigma^*(n)}.
\]
By Lemma \ref{lm:rep},
\[
\E\left|\sum_{j=1}^n x_j\e_j\right| = \E\left|\sum_{j=1}^n x_{\sigma^*(j)}\e_j\right| = \max_{a \in A_n} \ \sum_{j=1}^n a_jx_{\sigma^*(j)}
\]
Moreover, by the rearrangement inequality, for an arbitrary permutation $\sigma$ and arbitrary $a \in A_n$ we get
\[
\sum_{j=1}^n a_jx_{\sigma^*(j)} \geq \sum_{j=1}^n a_jx_{\sigma(j)}
\]
since both sequences $(a_j)$ and $(x_{\sigma^*(j)})$ are nonincreasing. This finishes the proof.
\end{proof}

To see that the function in \eqref{eq:convexityL1sum} is convex, note that from Corollary \ref{cor:full-rep}, we have
\[
\Phi(t_1, \dots, t_n) =  \max_{\sigma \in S_n, a \in A_n} \log\sum_{j=1}^n a_je^{t_{\sigma(j)}}.
\]
For a fixed $a \in A_n$ and $\sigma \in S_n$, the function
\[
\log\sum_{j=1}^n a_je^{t_{\sigma(j)}}
\]
is convex (as sums of log-convex functions are log-convex). Thus so is their pointwise maximum.\hfill$\square$

\end{document}